\newtheorem{theorem}{Theorem}[section]
\newtheorem{claim}{Claim}
\newtheorem*{claim*}{Claim}
\newtheorem*{proofclaim*}{Proof of Claim}
\theoremstyle{theorem}
\newtheorem*{remark}{Remark}
\newtheorem*{Exercise 11.3.B}{Exercise 11.3.B}
\newtheorem{conjecture}{Conjecture}
\numberwithin{equation}{section}
\newcommand{\cm}{\text{Cohen-Macaulay}}
\begin{document}
	\newcommand{\fg}{\text{finitely generated}}	
	\newcommand{\les}{\text{long exact sequence}}

	\title[An Instance of the Small Cohen-Macaulay Conjecture]
	{On an Instance of   the Small Cohen-Macaulay Conjecture}
	\author{Likun Xie}

	\address{Department of Mathematics, University of Illinois, 1409 West Green
		Street, Urbana, IL 61801, USA} \email{likunx2@illinois.edu}

	\begin{abstract} 
		We provide a simplified proof of a theorem proved by Tavanfar and Shimomoto which states that a quasi-Gorenstein deformation  of a $3$-dimensional quasi-Gorenstein local ring $(A,m,k)$ with $H^2_m(A)=k$  admits a small Cohen-Macaulay module. 
	\end{abstract}
	\keywords{Small Cohen-Macaulay Conjecture, First Syzygy Module}
	\subjclass{13D22, 13D45}
	\maketitle
\section{Introduction}

In this paper, we consider a commutative local ring $(A,m,k)$ that is  a homomorphic image of a Gorenstein ring such that $\operatorname{dim} A=n$ and $\operatorname{depth} A=n-1$. S. P. Dutta pointed out the following fact in a private conversation with the author: Let $R$ be a Gorenstein local ring with $\operatorname{dim} R=\operatorname{dim} A=d$ and $A\cong R/I $ is a homomorphic image of $R$. Assume that $R$ and $A$ are complete. 

From the short exact sequence $$0\to I\to R\to A\to 0,$$ we have that  $\operatorname{depth} I=d$. 

After applying $\operatorname{Hom}_R(-, R) $, we have the exact sequence where $ I^*=\operatorname{Hom}_R(I,R)$ $$0\to A \to R\xrightarrow{f} I^* \to \operatorname{Ext} _R^1(A,R)\to 0$$  

Let $T_2$ be the kernel of $f$, and let $T_3$ denote $\operatorname{Ext} _R^1(A,R)$. Since $I$ is  maximal \cm, $I^*=\operatorname{Hom}_R( I,R) $ is also maximal \cm. Hence $\operatorname{depth} I^*=d$. Therefore, we have $\operatorname{depth }T_2= d-2$ and $\operatorname{depth } T_3= d-3$. 
Moreover,  $$\operatorname{Ext }_R^1(A,R)
\cong \operatorname{Ext} _R^2(T_2,R) \cong \operatorname{Ext}_R^3(T_3, R ),$$ thus  $\operatorname{grade}_R \operatorname{Ext} _R^1(A,R)\geq 3$. Therefore, 
\begin{equation}
	\label{intro_eq}
	\operatorname{dim} \operatorname{Ext} _R^1(A,R)\leq d-3.
\end{equation}
 Let $x_1,x_2,x_3$ be a regular sequence on $R$  in $\operatorname{ann}_R \operatorname{Ext}_R^1(A,R)$, then  $\operatorname{Ext}_R^1(A,R)$ is  a small Cohen-Macaulay module over $A/(x_1,x_2,x_3)$. 

Recently, a new instance of the existence of small Cohen-Macauly module was proved in Theorem 3.2, \cite{remark}. The main focus of this paper is to give a simplified proof of this theorem. We reformulate the theorem as follow. 

	\begin{theorem}
	Let $(A,m,k)$ be a quasi-Gorenstein local ring which is a homomorphic image of a Gorenstein ring. Suppose that $\operatorname{depth} A= \operatorname{dim} A-1$,  and there exists  a regular sequence $\underline{y}\in A$ such that $A/(\underline{y})$ is a $3$-dimensional  quasi-Gorenstein ring and $H_m^2(A/(\underline{y}))=k$. 
	Then $A$ admits a maximal Cohen-Macaulay module. 
\end{theorem}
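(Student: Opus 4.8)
The plan is to work with the completion, so we may assume $A$ is complete and write $A=R/I$ with $R$ Gorenstein local and $\dim R=\dim A=d$, exactly as in the introduction; then $\omega_A:=\operatorname{Hom}_R(A,R)\cong A$ by quasi-Gorensteinness, and we have the two short exact sequences $0\to A\to R\to T_2\to 0$ and $0\to T_2\to I^*\to T_3\to 0$ with $T_3=\operatorname{Ext}^1_R(A,R)\cong H^{d-1}_{\mathfrak m}(A)^{\vee}$, $\operatorname{depth}_A T_2=d-2$, $\operatorname{depth}_A T_3=d-3$, and $\dim T_3\le d-3$ by \eqref{intro_eq}; in particular $T_3$ is a Cohen--Macaulay $A$-module of dimension $d-3$.

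The first substantive step is to extract from the deformation hypothesis a rigid description of $T_3$. Since $\operatorname{depth}A=d-1$, iterating the long exact local cohomology sequence along the $A$-regular sequence $\underline y$ (of length $d-3$), at each stage the relevant lower cohomology vanishing, gives $H^2_{\mathfrak m}(A/(\underline y))\cong\bigl(0:_{H^{d-1}_{\mathfrak m}(A)}\underline y\bigr)$, and by Matlis duality this is $\bigl(T_3/(\underline y)T_3\bigr)^{\vee}$. Hence $H^2_{\mathfrak m}(A/(\underline y))=k$ forces $T_3/(\underline y)T_3\cong k$, so $T_3$ is cyclic, $T_3\cong A/J$ with $J=\operatorname{ann}_A T_3$, the images of $\underline y$ generate $\mathfrak m_{A/J}$, and—since $\dim A/J=d-3$ equals the number of these generators—$A/J$ is a regular local ring of dimension $d-3$ with regular system of parameters the image of $\underline y$. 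Consequently $H^{d-1}_{\mathfrak m}(A)\cong E_{A/J}(k)$, $\operatorname{Ext}^i_R(A/J,R)=0$ for $i\ne3$ with $\operatorname{Ext}^3_R(A/J,R)\cong A/J$, and a change-of-rings spectral sequence yields $\operatorname{Hom}_A(A/J,A)=\operatorname{Ext}^1_A(A/J,A)=0$ and $\operatorname{Ext}^2_A(A/J,A)\cong A/J$; equivalently $\operatorname{grade}(J,A)=2$ and $\operatorname{Hom}_A(J,A)\cong A$.

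With this structure in hand, the heart of the argument is to manufacture a finitely generated $A$-module $M$ with $\operatorname{depth}_A M=d$. The strategy is to upgrade the introduction's observation—which only produces a small Cohen--Macaulay module over $A/(x_1,x_2,x_3)$—to a module over $A$ itself, exploiting that the sole obstruction $H^{d-1}_{\mathfrak m}(A)\cong E_{A/J}(k)$ now "comes from" the regular ring $A/J$. Concretely, I would either (i) build $M$ out of the sequence $0\to T_2\to I^*\to A/J\to 0$ by resolving the surjection $I^*\twoheadrightarrow A/J$ against a finite free $A/J$-resolution, forming the associated pullback, and passing to the quotient killing $I$—here one must control $I\cdot I^*$ and use $\omega_A\cong A$ to identify $\operatorname{Hom}_R(-,R)$ with $\operatorname{Hom}_A(-,A)$ on $A$-modules—or (ii) first dispose of the base case $B:=A/(\underline y)$, a $3$-dimensional quasi-Gorenstein ring with $\operatorname{depth}B=2$ and $B/J_B=k$, by producing an explicit maximal Cohen--Macaulay $B$-module as a first syzygy module read off from the corresponding sequences over $B$, and then lift its presenting matrix along $\underline y$, checking the lift stays maximal Cohen--Macaulay over $A$. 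In either case the concluding verification that $\operatorname{depth}_A M=d$ is a diagram chase: running the long exact local cohomology sequences attached to the short exact sequences defining $M$ and using $H^i_{\mathfrak m}(A)=0$ for $i<d-1$, $H^{d-1}_{\mathfrak m}(A)\cong E_{A/J}(k)$, and $H^i_{\mathfrak m}(A/J)=0$ for $i\ne d-3$, one checks that the unique connecting map carrying $H^{d-1}_{\mathfrak m}(A)$ is an isomorphism onto $H^{d-3}_{\mathfrak m}(A/J)$, whence $H^i_{\mathfrak m}(M)=0$ for all $i<d$.

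The main obstacle is precisely this construction step: writing down $M$, verifying that it is annihilated by $I$ (so that it is a genuine $A$-module, not merely an $R$-module), and pinning down its depth. This is where all the hypotheses are consumed—quasi-Gorensteinness of $A$ to get $\omega_A\cong A$, the regular sequence $\underline y$ to cut the problem down to dimension $3$, and $H^2_{\mathfrak m}(A/(\underline y))=k$ to force $A/J$ to be regular—and it is exactly the part that S.\,P.\,Dutta's observation recorded in the introduction does not by itself provide.
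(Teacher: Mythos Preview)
Your proposal sets up the problem correctly and extracts a pleasant structural fact---that $T_3=\operatorname{Ext}^1_R(A,R)\cong A/J$ is actually a regular local ring of dimension $d-3$---but it stops exactly where the proof has to begin. You explicitly flag the construction of $M$ as ``the main obstacle'' and offer two sketches without executing either. Strategy (i) has a real problem: $I^*=\operatorname{Hom}_R(I,R)$ and $T_2$ are $R$-modules but in general not $A$-modules, so the pullback you describe lives over $R$, and ``passing to the quotient killing $I$'' is not a well-defined operation on it. Strategy (ii) is in the right spirit but too vague: which first syzygy, of which module, and why does the lift remain maximal Cohen--Macaulay?

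The paper fills precisely this gap. The missing idea is to introduce an auxiliary element $x\in\operatorname{ann}_A T_3$ that, together with $\underline y$, forms an $A$-regular sequence (such $x$ exists because $T_3$ vanishes after localizing at any associated prime of $A/(\underline y)$), and then to take $M=\Omega_{A/xA}$, the first $A$-syzygy of the canonical module $\omega_{A/xA}$. The reduction to dimension $3$ works because $\underline y$ is regular on $T_3$, forcing $\omega_{A/(x,\underline{y_{i-1}})}/y_i\cong\omega_{A/(x,\underline{y_i})}$ at each stage, so syzygies descend along $\underline y$. In dimension $3$ one has the short exact sequence $0\to A/xA\to\omega_{A/xA}\to k\to 0$, and the decisive computation is $\operatorname{Ext}^2_A(\omega_{A/xA},A)\cong\operatorname{Ext}^1_{A/xA}(\omega_{A/xA},A/xA)=0$: applying $\operatorname{Hom}_{A/xA}(k,-)$ shows $\operatorname{Ext}^1_{A/xA}(k,A/xA)\cong k$, and the connecting map $A/xA\to k$ obtained by applying $\operatorname{Hom}_{A/xA}(-,A/xA)$ must be nonzero, since otherwise the sequence would split and $\omega_{A/xA}$ would acquire a $k$-summand, contradicting $\operatorname{depth}\omega_{A/xA}\ge2$. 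A five-term exact sequence from the change-of-rings spectral sequence then converts this Ext-vanishing into $H^2_{\mathfrak m}(\Omega_{A/xA})=0$. Your observation that $A/J$ is regular is correct but not needed; what matters is only $T_3/(\underline y)T_3\cong k$, and it is exactly the ``$=k$'' (rather than $k^n$ with $n>1$) that lets the non-splitting argument force surjectivity onto a one-dimensional target.
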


The Small Cohen-Macaulay Conjecture states the following, see   \cite[Section 2]{conjecture}:
\begin{conjecture}
	Every complete local domain has a small Cohen-Macaulay module.
\end{conjecture}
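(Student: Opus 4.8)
The plan is to produce a maximal Cohen--Macaulay $A$-module as a carefully chosen self-extension of $A$, controlled entirely by the deficiency module of $A$; the role of the hypothesis $H^2_{\mathfrak m}(A/(\underline y))\cong k$ is to force that deficiency module to be as rigid as possible. Write $A\cong R/I$ with $R$ a Gorenstein local ring of the same dimension $n:=\operatorname{dim}A$, so (as in the introduction) $\operatorname{Hom}_R(A,R)\cong A$, the ideal $I$ is maximal Cohen--Macaulay over $R$, and the only nonvanishing ``defect'' module of $A$ is $W:=\operatorname{Ext}^1_R(A,R)\cong H^{n-1}_{\mathfrak m}(A)^{\vee}$, with $\operatorname{grade}_R W\ge 3$ and $W\cong\operatorname{Ext}^3_R(W,R)$. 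The first step — and the technical core of the reduction — is to deduce from $H^2_{\mathfrak m}(A/(\underline y))\cong k$ that $W$ is a \emph{cyclic Cohen--Macaulay} $A$-module of dimension exactly $n-3$ with $\omega_W\cong W$, i.e.\ a Gorenstein residue ring of $A$. For this I would use the local-cohomology change-of-rings spectral sequence relating $H^\bullet_{\mathfrak m}(A/(\underline y))$ to the Koszul homology of $\underline y$ on $H^\bullet_{\mathfrak m}(A)$: since $H^q_{\mathfrak m}(A)=0$ for $q\notin\{n-1,n\}$ and $\underline y$ is an $A$-regular sequence of length $n-3$, the Koszul homology $H_j(\underline y;H^n_{\mathfrak m}(A))$ is concentrated in degree $j=n-3$, so there is no room for differentials and $H^2_{\mathfrak m}(A/(\underline y))\cong(0:_{H^{n-1}_{\mathfrak m}(A)}(\underline y))$; Matlis dualizing gives $W/(\underline y)W\cong k$. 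Hence $W$ is cyclic; moreover $W\ne 0$ (as $A$ is not Cohen--Macaulay) together with $\operatorname{grade}_R W\ge3$ and $W\cong\operatorname{Ext}^3_R(W,R)$ force $\operatorname{dim}W=n-3$, so $\underline y$ is a system of parameters on $W$ with $e(\underline y;W)=1=\ell(W/(\underline y)W)$; by the multiplicity criterion $W$ is Cohen--Macaulay, and then $\omega_W=\operatorname{Ext}^3_R(W,R)\cong W$.

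Next, build the module: let $Z:=\Omega^1_A(W)$ be the first syzygy of $W$ over $A$; since $W$ is cyclic, $Z=\operatorname{ann}_A W$ and $0\to Z\to A\to W\to 0$. Since $H^\bullet_{\mathfrak m}(W)$ is concentrated in degree $n-3$ with $H^{n-3}_{\mathfrak m}(W)\cong\omega_W^{\vee}\cong W^{\vee}\cong H^{n-1}_{\mathfrak m}(A)$ (using both the Cohen--Macaulayness and the Gorensteinness of $W$), the long exact sequence in local cohomology of $0\to Z\to A\to W\to 0$ gives $H^i_{\mathfrak m}(Z)=0$ for $i<n-2$, $H^{n-2}_{\mathfrak m}(Z)\cong H^{n-1}_{\mathfrak m}(A)$, $H^{n-1}_{\mathfrak m}(Z)\cong H^{n-1}_{\mathfrak m}(A)$, and $H^n_{\mathfrak m}(Z)\cong H^n_{\mathfrak m}(A)$. (When $\underline y$ is empty, i.e.\ $A$ itself is the $3$-dimensional ring, this $Z$ is just the maximal ideal of $A$.)

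Now form an extension. Applying $\operatorname{Hom}_A(-,A)$ to $0\to Z\to A\to W\to 0$ gives $\operatorname{Ext}^1_A(Z,A)\cong\operatorname{Ext}^2_A(W,A)$, and the change-of-rings spectral sequence $\operatorname{Ext}^p_A(W,\operatorname{Ext}^q_R(A,R))\Rightarrow\operatorname{Ext}^{p+q}_R(W,R)$, together with $\operatorname{Hom}_R(W,R)=\operatorname{Ext}^1_R(W,R)=\operatorname{Ext}^2_R(W,R)=0$, yields $\operatorname{Hom}_A(W,A)=\operatorname{Ext}^1_A(W,A)=0$ and an isomorphism $\operatorname{Ext}^2_A(W,A)\cong\operatorname{End}_A(W)$, which equals $W$ because $W$ is a cyclic residue ring of $A$. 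So $\operatorname{Ext}^1_A(Z,A)\cong W\ne 0$; let $\xi$ be the class corresponding to $\operatorname{id}_W$ under these isomorphisms and let $0\to A\to E\to Z\to 0$ be the extension it defines — equivalently, $E$ is the middle term of a $2$-extension $0\to A\to E\to A\to W\to 0$ representing $\operatorname{id}_W\in\operatorname{End}_A(W)\cong\operatorname{Ext}^2_A(W,A)$.

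Finally, run the long exact local cohomology sequence of $0\to A\to E\to Z\to 0$. By the computation above, $H^i_{\mathfrak m}(E)=0$ automatically for $i<n-2$, while for $i=n-2,n-1$ the vanishing of $H^i_{\mathfrak m}(E)$ is equivalent to the connecting map $\delta^{n-2}\colon H^{n-2}_{\mathfrak m}(Z)\to H^{n-1}_{\mathfrak m}(A)$ being an isomorphism and $\delta^{n-1}\colon H^{n-1}_{\mathfrak m}(Z)\to H^n_{\mathfrak m}(A)$ being injective (in which case the latter is the canonical embedding $H^{n-1}_{\mathfrak m}(A)=W^{\vee}\hookrightarrow E(k)$, which exists precisely because $\operatorname{socle}H^{n-1}_{\mathfrak m}(A)\cong(W/\mathfrak m W)^{\vee}\cong k$ by the cyclicity of $W$). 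Granting this, $H^i_{\mathfrak m}(E)=0$ for all $i<n$ while $H^n_{\mathfrak m}(E)\ne 0$, so $E$ is maximal Cohen--Macaulay over $A$, which proves the theorem. The one genuinely delicate point — which I expect to be the main obstacle — is this last identification of the connecting maps: they are cup products with $\xi$, so one must verify that cupping with the class of $\operatorname{id}_W$ realizes the canonical identifications of $H^{n-2}_{\mathfrak m}(Z)\cong H^{n-3}_{\mathfrak m}(W)$, $H^{n-1}_{\mathfrak m}(A)$ and $H^n_{\mathfrak m}(A)$ (all expressed through $W$, $W^{\vee}$ and $E(k)$) and not the zero map. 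I would try to get this from the compatibility of the above spectral sequence with $R\Gamma_{\mathfrak m}$ and Grothendieck local duality over the Gorenstein ring $R$ — concretely, from the quasi-isomorphism $R\Gamma_{\mathfrak m}(\operatorname{RHom}_R(A,R))\simeq R\Gamma_{\mathfrak m}(A)$ provided by biduality over $R$ — which should pin the class $\xi$ down to the one with the required connecting maps.
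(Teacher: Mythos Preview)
The statement you were given is the Small Cohen--Macaulay \emph{Conjecture}, which the paper records as an open problem and does not prove; there is no proof in the paper to compare against. Your proposal is not a proof of this conjecture either: every step relies on the special hypotheses of Theorem~2.1 (quasi-Gorenstein, $\operatorname{depth}A=\dim A-1$, existence of the regular sequence $\underline y$ with $H^2_{\mathfrak m}(A/(\underline y))\cong k$), so what you have written is at best an alternative argument for that theorem, not for the conjecture about arbitrary complete local domains.

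Read as an attack on Theorem~2.1, your route differs substantially from the paper's. The paper chooses a non-zerodivisor $x\in\operatorname{ann}_A W$, takes the first syzygy $\Omega_{A/xA}$ of $\omega_{A/xA}$ as its candidate module, reduces modulo $\underline y$ to $\dim A=3$ via a lifting argument, and there proves $\operatorname{Ext}^2_A(\omega_{A/xA},A)=0$ from the fact that the short exact sequence $0\to A/xA\to\omega_{A/xA}\to k\to 0$ cannot split (since $\operatorname{depth}\omega_{A/xA}\ge 2$), finishing with the five-term sequence of the change-of-rings spectral sequence. You instead stay in dimension $n$, set $Z=\operatorname{ann}_A W$, and take the candidate to be a specific extension $E$ of $Z$ by $A$ classified by $\operatorname{id}_W\in\operatorname{End}_A(W)\cong\operatorname{Ext}^2_A(W,A)$. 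Your preliminary analysis of $W$ (cyclic, Cohen--Macaulay of dimension $n-3$, $\omega_W\cong W$) is correct and sharper than what the paper uses. However, the step you yourself flag as delicate---that the connecting maps $\delta^{n-2}$ and $\delta^{n-1}$ for the chosen class $\xi$ are an isomorphism and an injection---is a genuine gap: cup product with a nonzero extension class need not act nontrivially on local cohomology, and the biduality heuristic you sketch does not pin this down. The paper's argument is engineered precisely to avoid identifying any such connecting map, replacing it with the elementary non-splitting observation above.
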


The above theorem tackles a special case in this conjecture. 

 Both our proof and the proof in \cite{remark} reduce to the case of dimension $3$ first. In our approach, a key fact that we used is that   $\operatorname{Ext}_R^1(A,R) $ is \cm\space of dimension $\operatorname{dim}A-3$ and that the exact sequence $\underline{y}\in A$ is regular on $\operatorname{Ext}_R^1(A,R)$, for $A$ a quasi-Gorenstein local ring  with $\operatorname{depth}A=\operatorname{dim}A-1$ which is a homomorphic image of a Gorenstein ring $R$. This  quickly gives the liftability of $\omega_{A/(x,\underline{y_i})}$ to $A/(x,\underline{y_{i-1}})$ which is desired for the reduction. We did not use the theory of attached primes of local cohomology and the recent results studied by Tavanfar and Tousi on quasi-Gorenstein rings   as in \cite{remark}. We remarked that in the reduction step, we do not yet need the assumption $H_m^2(A/(\underline{y}))=k$.  
 The rest of our proof centres around the five-term exact sequence  \eqref{five_term_exact sequence} which makes it clear that the aim is to show  $\operatorname{Ext}_A^2(\omega_{A/xA},A)=0$. And we did not  use the theory of $S_2$-ification and dualizing complex in derived category as in \cite{remark}.

 	Our proof features  straightforward arguments  that is easy to navigate, without relying on heavy machinery.  It is clear from the proof where $H_m^2(A/(\underline{y}))=k$ is essential for Theorem \ref{main_theorem} to be true: to show that $j$ is surjective by showing that $i$ is not surjective in \eqref{exact1}. Indeed, when $H_m^2(A/(\underline{y}))=k^n$ for any $n>1$, $\Omega_{A/xA}$ fails to be maximal \cm. To see this, assume $H_m^2(A/(\underline{y}))=k^n$ for some $n>1$ while the other assumptions in the theorem remain the same. After reduction to $\operatorname{dim }A =3$, an analogue of  \eqref{short_exact_1} carries through to give a short exact sequence $$0\to A/xA\to \omega_{A/xA}\to k^n\to 0,$$
 	 and then applying $\operatorname{Hom}_{A/xA}(k^n,-)$ gives $$\operatorname{Ext}^1_{A/xA} (k^n,A/xA)\cong \operatorname{Hom}_{A/xA} (k^n,k^n)\cong k^{n^2}.$$
 	 Following the line of the rest of the proof, we have $\Omega_{A/xA}$ is \cm, if and only if, $A/xA\xrightarrow{j} \operatorname{Ext}_{A/xA}^1(k^n,A/xA)\cong k^{n^2}$ is surjective. But this is impossible by tensoring $j$ with $-\otimes_{A/xA} k$. 
 	   
 	  More generally, it is natural to speculate whether $k$ in the condition $H_m^2(A/(\underline{y}))=k$ of Theorem \ref{main_theorem} can be replaced by other nontrivial modules. Suppose $H_m^2(A/(\underline{y}))=M\neq 0$ and all other conditions in Theorem \ref{main_theorem} remain the same. One can see  during the reduction step, $M$ must be of finite length. After reduction to $\operatorname{dim }A=3$, an analogue of \eqref{short_exact_1} gives the exact sequence $$0\to A/xA\to \omega_{A/xA}\to M^\vee\to 0,$$ then applying $\operatorname{Hom}_{A/xA} (M^\vee,-)$ gives  $$\operatorname{Ext} _{A/xA}^1(M^\vee, A/xA) \cong \operatorname{Hom}_{A/xA} (M^\vee,M^\vee). $$
 	  
 	  The map $j$ becomes $\operatorname{Hom}_{A/xA} (A/xA,A/xA) \to \operatorname{Ext}^1_{A/xA} (M^\vee, A/xA)$. Tracing backwards in the proof,  $\Omega_{A/xA}$ is maximal \cm, if an only if,  $j$ is surjective. For $j$ to be surjective, there must be $\operatorname{Hom}_{A/xA} (M^\vee,M^\vee )\otimes_{A/xA} k\cong k$, $M^\vee $ hence $M$ must be indecomposable.


\section{}

The following theorem is a reformulation of Theorem 3.2, \cite{remark}. For any local ring $(A,m,k)$ and an $A$-module $M$, we use the notation $\Omega_M$ to denote the first syzygy in the minimal free resolution of the canonical module $\omega_M$ of $M$.  We denote by $E=E_A(k)$ the injective hull of the residue field $k$ and by  $(-)^{\vee}=\operatorname{Hom}_A(-,E)$ the Matlis dual. We say that a local ring $A$ is \textit{quasi-Gorenstein} if the canonical module $\omega_A\cong A$.

	\begin{theorem}\label{main_theorem}
	Let $(A,m,k)$ be a quasi-Gorenstein local ring which is a homomorphic image of a Gorenstein ring. Suppose that $\operatorname{depth} A= \operatorname{dim} A-1$,  and there exists  a regular sequence $\underline{y}\in A$ such that $A/(\underline{y})$ is a $3$-dimensional  quasi-Gorenstein ring and $H_m^2(A/(\underline{y}))=k$. 
	 Then $A$ admits a maximal Cohen-Macaulay module. Namely, there is some $x\in (0:_A H_m^{\operatorname{dim}A-1}(A)) $ such that $x,\underline{y}$ forms a regular sequence   of $A$. The first syzygy $\Omega_{A/xA}$ of the canonical module $\omega_{A/xA}$ in the minimal free resolution is a maximal Cohen-Macaulay module of rank $2$. 
	\end{theorem}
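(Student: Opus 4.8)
\textit{Proof strategy.} Write $A\cong R/I$ with $R$ Gorenstein, $\dim R=\dim A=:n$, both complete, and put $T:=\operatorname{Ext}^1_R(A,R)\cong H_m^{n-1}(A)^\vee$. The first step is the \emph{key fact}: $T$ is Cohen-Macaulay of dimension $n-3$ and $\underline y$ is a $T$-regular sequence. The Cohen-Macaulayness is the computation sketched in the introduction --- applying $\operatorname{Hom}_R(-,R)$ to $0\to I\to R\to A\to 0$, using $\operatorname{Hom}_R(A,R)\cong\omega_A\cong A$ and that $I$ and $I^{*}$ are maximal Cohen-Macaulay, gives $\operatorname{depth}T=n-3$ and $\operatorname{grade}_R T\ge 3$, hence $\dim T=n-3$. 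Running the same argument over $R/(\underline y)$ for the ring $A/(\underline y)$ (again quasi-Gorenstein, a homomorphic image of a Gorenstein ring, with depth one below its dimension) shows $\operatorname{Ext}^1_{R/(\underline y)}(A/(\underline y),R/(\underline y))$ is Cohen-Macaulay of dimension $0$; since $\underline y$ is regular on $R$ and on $A$, this module is $T/\underline y T$, so $\underline y$ is a system of parameters for the Cohen-Macaulay module $T$, hence $T$-regular. (Only the finite length of $T/\underline y T$ is used here, not the precise value of $H_m^2(A/(\underline y))$.)

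Next, choose $x$. Since $\dim A/\operatorname{ann}_A T=n-3$, the ideal $(0:_A H_m^{n-1}(A))=\operatorname{ann}_A T$ lies in no associated prime of $A$ or of $A/(\underline y)$, so prime avoidance gives $x$ in it with $x,\underline y$ a regular sequence of $A$, lifting to an $R$-regular element. Feeding $0\to R\xrightarrow{x}R\to R/xR\to 0$ into $\operatorname{Hom}_R(A,-)$ and using $xT=0$ produces a short exact sequence $0\to A/xA\to\omega_{A/xA}\to T\to 0$. As $\underline y$ is regular on $A/xA$ and on $T$, it is regular on $\omega_{A/xA}$; and since $\underline y$ is regular on $\operatorname{Ext}^1_{R/xR}(A/xA,R/xR)\cong T$, the change-of-rings formula $\omega_{M/yM}\cong\omega_M/y\omega_M$ (valid when $y$ is $M$-regular and regular on $\operatorname{Ext}^1_S(M,S)$, $S$ Gorenstein with $\dim S=\dim M$) iterates to $\omega_{A/xA}/\underline y\,\omega_{A/xA}\cong\omega_{A/(x,\underline y)}$. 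Hence the minimal free $A$-resolution of $\omega_{A/xA}$ reduces modulo $\underline y$ to the minimal free $A/(\underline y)$-resolution of $\omega_{A/(x,\underline y)}$, so $\underline y$ is regular on $\Omega_{A/xA}$ and $\Omega_{A/xA}/\underline y\,\Omega_{A/xA}\cong\Omega_{A/(x,\underline y)}$; by the depth formula $\Omega_{A/xA}$ is maximal Cohen-Macaulay over $A$ iff $\Omega_{A/(x,\underline y)}$ is so over $A/(\underline y)$. This reduces the theorem to the case $\dim A=3$, $\operatorname{depth}A=2$, $H_m^2(A)=k$, where $T=k$.

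Assume now $\dim A=3$; then $0\to A/xA\xrightarrow{\iota}\omega_{A/xA}\to k\to 0$. This does not split, because $\operatorname{depth}\omega_{A/xA}=2$ rules out a finite-length summand; in particular $\omega_{A/xA}$ is not cyclic ($A/xA$ is not $S_2$), so $\mu(\omega_{A/xA})=2$ and $\Omega_{A/xA}$ has rank $2$ in $0\to\Omega_{A/xA}\to A^2\to\omega_{A/xA}\to 0$. From that sequence $H_m^0(\Omega_{A/xA})=H_m^1(\Omega_{A/xA})=0$, i.e.\ $\operatorname{depth}_A\Omega_{A/xA}\ge 2$, and what remains is $H_m^2(\Omega_{A/xA})=0$. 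Dualizing $0\to\Omega_{A/xA}\to A^2\to\omega_{A/xA}\to 0$ into $R$ and using local duality over $R$ yields the five-term exact sequence
\[0\to A^2\to\operatorname{Hom}_A(\Omega_{A/xA},A)\to\omega_{A/xA}\xrightarrow{\ \beta\ }k^2\to H_m^2(\Omega_{A/xA})^\vee\to 0\]
(using $H_m^2(\omega_{A/xA})^\vee\cong\omega_{A/xA}$, the $S_2$-ification of $A/xA$, and $H_m^2(A)^\vee\cong k$); together with the change-of-rings isomorphism $\operatorname{Ext}^2_A(\omega_{A/xA},A)\cong\operatorname{Ext}^1_{A/xA}(\omega_{A/xA},A/xA)$ this identifies the obstruction: $\Omega_{A/xA}$ is maximal Cohen-Macaulay exactly when $\operatorname{Ext}^2_A(\omega_{A/xA},A)=0$.

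To kill this, apply $\operatorname{Hom}_{A/xA}(-,A/xA)$ to $0\to A/xA\xrightarrow{\iota}\omega_{A/xA}\to k\to 0$. From $\operatorname{depth}(A/xA)=1$ and $\operatorname{depth}\omega_{A/xA}=2$ one gets $\operatorname{Ext}^1_{A/xA}(k,A/xA)\cong\operatorname{Hom}_{A/xA}(k,k)\cong k$ and the exact sequence
\[0\to\operatorname{Hom}_{A/xA}(\omega_{A/xA},A/xA)\xrightarrow{\ i\ }\operatorname{Hom}_{A/xA}(A/xA,A/xA)\xrightarrow{\ j\ }k\to\operatorname{Ext}^1_{A/xA}(\omega_{A/xA},A/xA)\to 0,\]
so $\operatorname{Ext}^2_A(\omega_{A/xA},A)\cong\operatorname{coker}(j)$, with $j$ landing in the simple module $k$. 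Thus $\operatorname{coker}(j)=0$ as soon as $j\ne 0$, and $j\ne 0$ because $i$ is restriction along $\iota$, so $\operatorname{id}_{A/xA}\in\operatorname{im}(i)$ would split $\iota$, which is impossible. Hence $j$ is surjective, $\operatorname{Ext}^2_A(\omega_{A/xA},A)=0$, and $\Omega_{A/xA}$ is a maximal Cohen-Macaulay $A$-module of rank $2$. The main obstacle is the reduction apparatus of the first two steps --- establishing that $\operatorname{Ext}^1_R(A,R)$ is Cohen-Macaulay with $\underline y$ regular on it, and deducing the liftability $\omega_{A/xA}/\underline y\,\omega_{A/xA}\cong\omega_{A/(x,\underline y)}$; the crux of the last step is simply the observation that one needs only $j\ne 0$, which is precisely why the hypothesis must read $H_m^2(A/(\underline y))=k$ and not a larger module $k^{m}$ (for which $j\ne 0$ no longer forces $j$ onto $k^{m^2}$, and $\Omega_{A/xA}$ fails to be maximal Cohen-Macaulay).
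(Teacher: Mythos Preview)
Your outline follows the paper's proof closely: the same reduction via Cohen--Macaulayness of $T=\operatorname{Ext}^1_R(A,R)$ and regularity of $\underline y$ on it, the same short exact sequence $0\to A/xA\to\omega_{A/xA}\to k\to 0$, and the same non-splitting argument to kill $\operatorname{Ext}^1_{A/xA}(\omega_{A/xA},A/xA)\cong\operatorname{Ext}^2_A(\omega_{A/xA},A)$. Two points deserve tightening.

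First, your prime-avoidance step (``since $\dim A/\operatorname{ann}_A T=n-3$, the ideal $\operatorname{ann}_A T$ lies in no associated prime of $A/(\underline y)$'') does not follow from dimension alone once $n\ge 6$: an associated prime of $A/(\underline y)$ has height $n-3$, and nothing prevents a height-$(n-3)$ prime from containing an ideal of height $3$. The paper instead localizes: for $p\in\operatorname{Ass}(A/(\underline y))$ the ring $A_p$ is Cohen--Macaulay (because $A/(\underline y)$ is $S_2$, catenary, hence unmixed), so $T_p\cong H^{\dim A_p-1}_{pA_p}(A_p)^\vee=0$. Alternatively, you can rescue your own argument by noting that $\operatorname{Supp}(T/\underline y T)=\{m\}$ and any $p\in\operatorname{Ass}(A/(\underline y))$ contains $\underline y$, so $p\in\operatorname{Supp}(T)$ would force $p=m$.

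Second, and more substantively, you assert that your five-term sequence ``identifies the obstruction: $\Omega_{A/xA}$ is maximal Cohen--Macaulay exactly when $\operatorname{Ext}^2_A(\omega_{A/xA},A)=0$'', but you do not explain \emph{why} vanishing of $\operatorname{Ext}^2_A(\omega_{A/xA},A)$ forces your map $\beta$ to be surjective. Your sequence only tells you $H^2_m(\Omega_{A/xA})^\vee=\operatorname{coker}\beta$; the link to $\operatorname{Ext}^2_A$ is missing. The paper supplies this link via the Grothendieck spectral sequence $\operatorname{Ext}^i_A(M,\operatorname{Ext}^j_R(A,R))\Rightarrow\operatorname{Ext}^{i+j}_R(M,R)$, whose low-degree exact sequence
\[
0\to\operatorname{Ext}^1_A(M,A)\to\operatorname{Ext}^1_R(M,R)\to\operatorname{Hom}_A(M,k)\to\operatorname{Ext}^2_A(M,A)\to\operatorname{Ext}^2_R(M,R)
\]
is written for $M=A^2$ and $M=\omega_{A/xA}$ and compared in a commutative square. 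For $M=A^2$ the middle map is an isomorphism; for $M=\omega_{A/xA}$ it is surjective precisely when $\operatorname{Ext}^2_A(\omega_{A/xA},A)=0$; and $\operatorname{Hom}_A(\omega_{A/xA},k)\to\operatorname{Hom}_A(A^2,k)$ is an isomorphism because $A^2\to\omega_{A/xA}$ is a minimal cover. Chasing the square then gives surjectivity of $\beta$ (the paper's $f$). This spectral-sequence comparison is the one genuinely new ingredient your write-up is missing; everything else matches the paper's argument.
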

	\begin{proof}
		Let $d=\operatorname{dim} A=\operatorname{depth} A+1$, 
	 from the introduction, we have that $\operatorname{Ext} _R^1(A,R)$ is a \cm\space module over $R$ and therefore it is also a \cm\space $A$-module of dimension $d-3$.  Let $\underline{y} $ be the given regular sequence of length $d-3$ such that $A/(\underline{y})$ is quasi-Gorenstein.
We have $$\operatorname{Ext}_R^1(A,R) /\underline{y} \operatorname{Ext}_R^1(A,R)\cong \operatorname{Ext}_R^{d-2} (A/(\underline{y}), R) \cong  \operatorname{Ext}_{R/(\underline{y})}^1 (A/(\underline{y}), R/(\underline{y})).$$ 
Note that  $\operatorname{dim} \operatorname{Ext}_{R/(\underline{y})}^1 (A/(\underline{y}), R/(\underline{y}))=0$ by \eqref{intro_eq}, $\underline{y}$ is a system of parameter on  $\operatorname{Ext}_R^1(A,R)$ hence it is regular on $\operatorname{Ext}_R^1(A,R)$. 

Note that $A/(\underline{y})$ is $S_2$ and catenary, hence it's unmixed. Therefore, for any $p\in \operatorname{Ass} (A/(\underline{y}))$, $A_p$ is Cohen-Macaulay. 
Take any $p\in \operatorname{Ass}(A/(\underline{y})) $ and let $p'$ be the inverse image of $p$ in $R$. Then $$(\operatorname{Ext}^1_R(A,R))_p\cong \operatorname{Ext}^1_{R_{p'}}(A_p,R_{p'}) \cong H_{{p'}R_{p'}}^{\operatorname{dim}R_{p'}-1}(A_p)^\vee\cong H_{A_{p}}^{\operatorname{dim}A_{p}-1}(A_p)^\vee=0.$$ 
Hence $\operatorname{ann} (\operatorname{Ext}_R^1(A,R) )\nsubseteq  \bigcup_{p\in \operatorname{Ass}(A/(\underline{y}))} p$, there exists $x\in \operatorname{ann}(\operatorname{Ext}_R^1(A,R))$ such that $x,\underline{y}$ is a regular sequence on $A$. 

Let $\underline{y_k}$ denote the sequence $y_1,\dots, y_k$ for $1\leq k\leq d-3$. 

Consider the short exact sequence $$0\to A/(x,\underline{y_{i-1}})\xrightarrow{y_{i}}A/(x,\underline{y_{i-1}})\to A/(x,\underline{y_{i}})\to 0, $$ after taking the long exact sequence of local cohomology and applying $\operatorname{Hom}_R(-,E)$, we have that $y_i$ is regular on $\omega_{A/(x,\underline{y_{i-1}})}$ and  the exact sequence 
\begin{align}\label{exact_2}
0\to \omega_{A/(x,\underline{y_{i-1}})}/y_i\omega_{A/(x,\underline{y_{i-1}})}\to \omega_{A/(x,\underline{y_{i-1}})}\to \operatorname{Ext} _R^{i+1}(A/(x,\underline{y_{i-1}}),R) \xrightarrow{y_i} \operatorname{Ext} _R^{i+1}(A/(x,\underline{y_{i-1}}),R) .
\end{align}

Note that the last map $y_i$ is injective since  $\underline{y} $ is a regular sequence  on $\operatorname{Ext} _R^1(A,R)$ and
$$\operatorname{Ext} _R^{i+1}(A/(x,\underline{y_{i-1}}),R)\cong 	\operatorname{Ext} _R^1(A,R)/(x,\underline{y_{i-1}})\cong \operatorname{Ext} _R^1(A,R)/(y_1,\dots,y_{i-1}).$$  So we have for any $1<i\leq d-3$,  $$\omega_{A/(x,\underline{y_{i-1}})}/y_i\omega_{A/(x,\underline{y_{i-1}})}\cong  \omega_{A/(x,\underline{y_i})}.$$ 
 
Since $\omega_{A/(x,\underline{y_i})}$ is liftable to $A/(x,\underline{y_{i-1}})$,  by   \cite[Proposition 2.4]{lifting}, $\Omega_{A/(x,\underline{y_i})}$ is also liftable to $A/(x,\underline{y_{i-1}})$  thus we have  $\Omega_{A/xA}/(\underline{y})\cong \Omega_{A/(x,\underline{y})}$. 


Therefore, we may assume that $A$ is a complete $3$-dimensional quasi-Gorenstein local ring with $\operatorname{depth} A=2$ and  $H_m^2(A) =k$, we want to show that $\Omega_{A/xA}$ is  a maximal Cohen-Macaulay $A$-module. Let $R$ be a Gorenstein ring with $\operatorname{dim} R=\operatorname{dim} A$ and $A$ is a homomorphic image of $R$. We first prove the following claim.
\begin{claim}\label{claim}
	$\operatorname{Ext} ^2_A(\omega_{A/xA}, A)=0$. 
\end{claim}
\begin{proof}
	After applying $\operatorname{Hom}_R(-,R)$ to the short exact sequence $0\to A\xrightarrow{x} A\to A/xA\to 0$, we get the long exact sequence $$
	0\to A\xrightarrow{x} A \to \operatorname{Ext}^1_R(A/xA, R)\to \operatorname{Ext}^1_R(A, R)\xrightarrow{x}  \operatorname{Ext}^1_R(A, R)\to \dots$$ 
	By local duality \cite{hartshorne} and that $H_m^2(A)=k$, we have that $\operatorname{Ext}^1_R(A, R)\cong k $ and $k\xrightarrow{x} k$  is the zero map.

	  Thus we have the short exact sequence 
	\begin{equation}\label{short_exact_1}
		0\to A/xA\to \omega_{A/xA} \to k\to 0.
	\end{equation}
	(More generally, it follows from construction that $x\in \operatorname{ann}(\operatorname{Ext}_R^1(A,R))$ which gives the exact sequence
$
0\to A/xA\to \omega_{A/xA} \to H_m^2(A)^\vee \to 0.$)

	After applying $\operatorname{Hom}_{A/xA}(-, A/xA) $ to \eqref{short_exact_1}, we have the following exact sequence \eqref{exact1} and we want to show that $j$ in \eqref{exact1}  is surjective:
	\begin{align}\label{exact1}
		0&\to \operatorname{Hom}_{A/xA} (\omega_{A/xA},A/xA)\xrightarrow{i} \operatorname{Hom}_{A/xA} (A/xA,A/xA)\xrightarrow{j} \operatorname{Ext}_{A/xA}^1 (k, A/xA)\nonumber\\
		&\to \operatorname{Ext} _{A/xA}^1(\omega_{A/xA},A/xA)\to \operatorname{Ext}^1_{A/xA}(A/xA,A/xA)=0.
	\end{align}

	Moreover, after applying $\operatorname{Hom}_{A/xA}(k, -)$ to  \eqref{short_exact_1}, we have the exact sequence 
	$$0\to \operatorname{Hom}_{A/xA}(k,k)\cong k \to \operatorname{Ext} _{A/xA}^1(k,A/xA)\to 0$$
	where $\operatorname{Ext}_{A/xA}^i(k,\omega_{A/xA})=0$ for $i=0,1$ as $\operatorname{depth} \omega_{A/xA}\geq 2$. 
	Hence, $$\operatorname{Ext}_{A/xA}^1(k,A/xA)\cong k.$$

	Therefore, to show that $j$ is surjective, it suffices to show $j\neq 0$,  that is, $i$ is not surjective. If otherwise, $i$ is surjective then the existence of the preimage of $\mathrm{id}_{A/xA} $ implies that \eqref{short_exact_1} splits. Then $\omega_{A/xA} \cong A/xA\oplus k$ which is a contradiction as $\operatorname{depth} \omega_{A/xA}\geq 2$. Therefore, $j$ is surjective hence \begin{equation}
		\operatorname{Ext} ^2_A(\omega_{A/xA}, A)\cong \operatorname{Ext}^1_{A/xA} (\omega_{A/xA}, A/xA)=0.
	\end{equation}
	
\end{proof}

From \eqref{short_exact_1}, we have that $ \omega_{A/xA} $ is generated by at most $2$ elements. Since $A$ is $S_2$, $\omega_{A/xA} $ is  faithful over $A/xA$ \cite{canonical}.  Suppose $ \omega_{A/xA} $ is minimally generated by one element. Then $ \omega_{A/xA} \cong A/xA$  is Cohen-Macaulay which implies that $A$ is \cm, a contradiction. Therefore, $\omega_{A/xA} $ is minimally generated by $2$ elements, there is a short exact sequence 
\begin{equation}\label{exact_1}
	0\to \Omega_{A/xA}\to A^2\to \omega_{A/xA}\to 0. 
\end{equation}

After applying $\operatorname{Hom}_R(-,R)$ to \eqref{exact_1}, we have the exact sequence $$
\operatorname{Ext}_R^1(\omega_{A/xA},R) \xrightarrow{f} \operatorname{Ext} ^1_R(A^2,R)\to \operatorname{Ext} _R^1(\Omega_{A/xA},R) \to \operatorname{Ext} _R^2(\omega_{A/xA}, R)=0.$$ Here we have $\operatorname{Ext} _R^2(\omega_{A/xA}, R)=0$ since $\operatorname{depth} \omega_{A/xA} \geq 2$. Now to show that $\Omega_{A/xA}$ is maximal \cm, it suffices to show $\operatorname{Ext}_R^1(\Omega_{A/xA}, R)=H_m^2(\Omega_{A/xA})^\vee=0$ by local duality\cite{hartshorne}. Thus it suffices to show $\operatorname{Ext}_R^1(\omega_{A/xA},R) \xrightarrow{f} \operatorname{Ext} ^1_R(A^2,R)$ is surjective. 
	
For any $A$-module $M$, from the Grothendieck spectral sequence  \cite[P.145]{weibel}
$$\operatorname{Ext}_A^i(M, \operatorname{Ext}_R^j(A,R))\Rightarrow \operatorname{Ext}_R^{i+j} (M,R),$$ we have the five term exact sequence:
$$0\to \operatorname{Ext} _A^1(M,A) \to \operatorname{Ext} _R^1(M,R)\to \operatorname{Hom}_A(M, \operatorname{Ext}_R^1(A,R) ) \to  \operatorname{Ext} ^2_A(M,\operatorname{Hom}_R(A,R))\to \operatorname{Ext}_R^2(M,R) $$

which is the same as 
\begin{equation}\label{five_term_exact sequence}
	0\to \operatorname{Ext} _A^1(M,A) \to \operatorname{Ext} _R^1(M,R)\to \operatorname{Hom}_A(M, k ) \to  \operatorname{Ext} ^2_A(M,A)\to \operatorname{Ext}_R^2(M,R) 
\end{equation}

Take $M=A^2$ and $M=\omega_{A/xA}$ respectively, we have the following commutative diagram 

\begin{center}
\begin{tikzcd}
	{\operatorname{Ext} _R^1(A^2,R)} & {\operatorname{Hom}_A(A^2,k)} \\
	{\operatorname{Ext}_R^1(\omega_{A/xA}, R)} & {\operatorname{Hom}_A(\omega_{A/xA},k)}
	\arrow["\cong", from=1-1, to=1-2]
	\arrow["f",from=2-1, to=1-1]
	\arrow["\beta"',"\cong"  from=2-2, to=1-2]
	\arrow["\alpha"', two heads, from=2-1, to=2-2]
\end{tikzcd}
\end{center}
where $\alpha$ is surjective since $\operatorname{Ext}^2_A(\omega_{A/xA},A)=0$ by Claim \ref{claim}. And $\beta$ is an isomorphism  since the map $A^2\to \omega_{A/x}$ induces an isomorphism $A^2\otimes A/m\to \omega_{A/x} \otimes A/m$ and the map $\beta$ can be identified as $\operatorname{Hom}_A(\omega_{A/x}\otimes A/m,k)\to \operatorname{Hom}_A(A^2\otimes A/m,k)$ which is an isomorphism. Therefore, $f$ is surjective and we finished the proof. \end{proof}

	\begin{remark}
		We see that for a quasi-Gorenstein local ring $(A,m,k)$ that is a homomorphic image of a Gorenstein ring $R$ with $\operatorname{depth}A=\operatorname{dim}A-1$, if $\operatorname{dim }A=3$, then $H_m^{\operatorname{dim}A-1}(A) $ must be of finite length. If $\operatorname{dim }A>3$, $H_m^{\operatorname{dim}A-1}(A) $ is not of finite length. Indeed, $\operatorname{depth} H_{m}^{n-1}(A)^\vee =\operatorname{depth}\operatorname{Ext}_R^1(A,R)=\operatorname{dim}A-3$ as shown in the introduction, or by \cite[Lemma 1]{canonical}.  	
	\end{remark}

\end{document}